\documentclass[a4paper,12pt]{amsart}
\usepackage{amssymb}
\usepackage{mathrsfs}
\usepackage{color}
\usepackage{epsfig}
\usepackage{graphicx}
\usepackage[all]{xy}


\setlength{\textheight}{23cm}
\setlength{\textwidth}{15.5cm}
\setlength{\oddsidemargin}{0cm}
\setlength{\evensidemargin}{0cm}
\setlength{\topmargin}{0cm}





\theoremstyle{plain}
\newtheorem{theorem}{Theorem}[section]
\newtheorem{prop}[theorem]{Proposition}

\newtheorem{lemma}[theorem]{Lemma}

\theoremstyle{definition}
\newtheorem{remark}[theorem]{Remark}

\newtheorem{definition}[theorem]{Definition}
\newtheorem{example}[theorem]{Example}

\newtheorem{assumption}[theorem]{Assumption}

\newcommand{\C}{\mathbb{C}}
\newcommand{\R}{\mathbb{R}}
\newcommand{\Z}{\mathbb{Z}}

\newcommand{\vep}{\varepsilon}
\newcommand{\loc}{\rm loc}

\renewcommand{\tilde}{\widetilde}
\renewcommand{\setminus}{\smallsetminus}
\newcommand{\nin}{/\kern-2.1ex\in}

\def\<{\left\langle}
\def\>{\right\rangle}

\def\Ker{\operatorname{Ker}}
\def\ind{\operatorname{ind}}




\numberwithin{equation}{section}

\title[$S^1$-equivariant local index]
{$S^1$-equivariant local index and transverse index 
\\ for non-compact symplectic manifolds}

\author[H. Fujita]{Hajime Fujita$^1$}


\subjclass[2010]{Primary 53D50 ; Secondary 19K56, 58J20} 
\keywords{equivariant index, quantization conjecture}
\thanks{$^1$Partly supported by Grant-in-Aid for Young Scientists (B) 23740059.}


\address{Department of Mathematical and Physical Sciences \\ Japan Women's University \\ 2-8-1 Mejirodai, Bunkyo-ku Tokyo, 112-8681 Japan}
  \email{fujitah@fc.jwu.ac.jp}

\begin{document}

\maketitle

\begin{abstract}
We define an $S^1$-equivariant index 
for non-compact symplectic manifolds with Hamiltonian $S^1$-action. 
We use the perturbation by Dirac-type operator along the $S^1$-orbits. 
We give a formulation and a proof 
of quantization conjecture for this $S^1$-equivariant index. 
We also give comments on the relation 
between our $S^1$-equivariant index and 
the index of transverse elliptic operators. 
\end{abstract}


\section{Introduction}

In \cite{Fujita-Furuta-Yoshida1}, Furuta, Yoshida and the author gave a formulation 
of index theory of Dirac-type operator on open manifolds 
using  torus fibration and 
the perturbation by Dirac-type operator along fibers. 
In \cite{Fujita-Furuta-Yoshida2}  a refinement of 
it for a family of torus bundles with some compatibility conditions was given.  
In \cite{Fujita-Furuta-Yoshida3} the authors used 
equivariant version of them to give a geometric proof of 
quantization conjecture for Hamiltonian torus action on 
closed symplectic manifolds. 
In this paper we give a formulation of $S^1$-equivariant index theory 
for non-compact symplectic manifold with Hamiltonian $S^1$-action based on the framework  
of \cite{Fujita-Furuta-Yoshida1}.  
The resulting index is a homomorphism from $R(S^1)$ to $\Z$, 
and if the manifold is closed, then the $S^1$-equivariant index 
coincides with the Riemann-Roch character as a functional on $R(S^1)$.

We use a perturbation by the Dirac-type operator along 
$S^1$-orbits. On the other hand Braverman~\cite{Braverman} gave an 
index theory on open manifolds based on a perturbation 
by the vector field induced from certain equivariant map, 
e.g., moment map. 
His index theory realizes the index of transverse 
elliptic operators developed by Atiyah~\cite{Atiyah1} and Paradan-Vergne~\cite{ParadanVergne}. 
Both our construction and 
Braverman's construction use perturbation 
by operators along the orbits, and hence, they have conceptual similarity. 
We show that they are equal for the proper moment map case. 
We also show that they have different nature. 
In fact we will give an example in this paper which shows the difference.

Yoshida~\cite{Yoshida3} gave an expository article on the $S^1$-equivariant 
version of the index theory developed in \cite{Fujita-Furuta-Yoshida1, 
Fujita-Furuta-Yoshida2, Fujita-Furuta-Yoshida3}. 
The equivariant local index considered in \cite{Yoshida3} is a
straightforward generalization for the equivariant setting. 
Then the index in \cite{Yoshida3} is a finite dimensional object. 
On the other hand the transverse index in 
\cite{Braverman}, \cite{ Ma-Zhang, Ma-Zhangarxiv}, \cite{Paradan2} and  \cite{Vergne3} has infinite dimensional nature. 
In particular the index in \cite{Yoshida3} does not coincide with 
the transverse index.  
The equivariant index in the present paper is an 
another kind of generalization of \cite{Fujita-Furuta-Yoshida1} for the equivariant setting, 
which has an infinite dimensional nature.

In our construction it is straightforward to 
give a formulation and a proof of quantization conjecture 
for non-compact symplectic manifolds with Hamiltonian $S^1$-action. 
Vergne~\cite{Vergne3} proposed a quantization conjecture 
for non-compact symplectic manifolds, 
which was proved by Ma-Zhang~\cite{Ma-Zhang,Ma-Zhangarxiv} and Paradan gave a
new proof in \cite{Paradan2}.  
Vergne's conjecture is based on the index theory of transverse elliptic operators. 
Ma and Zhang showed Vergne's conjecture under weaker assumption, the properness of the moment map, using Braverman's index theory. 
We do not assume neither compactness of the fixed point set 
nor properness of the moment map as in \cite{Ma-Zhang,Ma-Zhangarxiv},  \cite{Paradan2} and \cite{Vergne3}.
We only assume that the inverse image of 
each integer point is compact.

This paper is organized as follows. 
In Section~2, we give a brief review of the construction in 
\cite{Fujita-Furuta-Yoshida1} 
to define an $S^1$-equivariant index $\ind_{S^1}(X, V)$. 
In Section~3, we apply the construction in Section~2 
to the symplectic geometry case. 
We define an $S^1$-equivariant local Riemann-Roch number $RR_{S^1, {\loc}}(M,L)$ 
for Hamiltonian $S^1$-action on non-compact symplectic manifold. 
In Section~4 we give a quantization conjecture for non-compact symplectic 
manifold with a Hamiltonian $S^1$-action. 
In Section~5, we give comments 
on relation between our equivariant index and that 
developed by Braverman~\cite{Braverman} and 
Ma-Zhang~\cite{Ma-Zhangarxiv}. 
In Appendix~A we give some details of the explicit computation of 
the kernel of a family of perturbed Dirac operators on the cylinder. 
The family contains the perturbations in this paper and that in 
\cite{Braverman}, and it shows the difference between two equivariant indices. 

\subsection{Notations}
\begin{itemize}
\item For each $n\in \Z$ let $\C_{(n)}$ be the complex line 
with the standard action of the circle group $S^1$ of weight $n$. 
\item Let $\rho$ be a representation space of $S^1$. 
For each $n\in \Z$ we denote by $\rho^{(n)}$ the multiplicity of the 
weight $n$ representation in $\rho$, i.e., we put 
$$
\rho^{(n)}:=\dim\left({\rm Hom}_{S^1}(\C_{(n)},\rho)\right).
$$ 
We will also use the same notation for elements in the representation ring 
$R(S^1)$. 
\end{itemize}

\section{Definition of the $S^1$-equivariant index}\label{boundary}\label{secboundary}
In this section we give a brief review of the construction in 
\cite{Fujita-Furuta-Yoshida1} 
to define an index $\ind_{S^1}(X, V)$.

\subsection{Setting} 
Let $X$ be a non-compact Riemannian manifold. 
Let $W=W^+\oplus W^-$ be a $\Z/2$-graded $Cl(TX)$-module bundle 
with the Clifford multiplication $c$. 
Suppose that the circle group $S^1$ acts on 
$X$ in an isometric way and the action lifts to $W$ so that it commutes 
with $c$. 
We assume that there exists an open subset $V$ of $X$ 
which satisfies the following assumption.

\begin{assumption}\label{n-acyclic}
\begin{enumerate}
\item The complement $X\setminus V$ is compact. 
\item $S^1$ acts on $V$ without fixed points. 
\item There exists an $S^1$-equivariant formally self-adjoint operator 
$D_{S^1}:\Gamma(W|_{V})\to \Gamma(W|_{V})$ which 
satisfies the following conditions. 
\begin{enumerate}
\item $D_{S^1}$ contains only the derivatives along the $S^1$-orbits, 
and its restriction to each orbit is a Dirac-type operator 
along the orbit. 
\item 
For each tangent vector $u$ which is normal to 
the $S^1$-orbit, $D_{S^1}$ anti-commutes with 
the Clifford multiplication of $\tilde u$ : 
$$
c(\tilde u)\circ D_{S^1}+D_{S^1}\circ c(\tilde u)=0, 
$$where $\tilde u$ is the vector field along the $S^1$-orbit 
which is obtained by $u$ and the $S^1$-action. 
\end{enumerate}
\item 
For all $x\in V$ the kernel of the restriction of $D_{S^1}$ 
to the orbit $S^1\cdot x$ is trivial,  
i.e., $\ker(D_{S^1}|_{S^1\cdot x})=0$. 
\end{enumerate}
\end{assumption}

If the data $(X, V, W, D_{S^1})$ satisfies these conditions, 
then we call $(X, V, W, D_{S^1})$ {\it acyclic}.  

\subsection{Definition of $\ind_{S^1}(X,V)$}
Following the procedure as in \cite{Fujita-Furuta-Yoshida1}, 
we can define an $S^1$-equivariant index 
${\rm ind}_{S^1}(X,V)\in R(S^1)$ for the acyclic data $(X, V, W, D_{S^1})$. 
Let us recall the definition. 
By using an $S^1$-invariant proper function 
we can deform the end $V$ of $X$ into a complete Riemannian manifold $\hat X$, 
for instance, with cylindrical end $\hat V$ in an $S^1$-equivariant way. 
Namely let $f:X\to \R$ be an $S^1$-invariant 
smooth function and $c$ its regular value such that 
$f^{-1}((-\infty, c])$ is compact and contains $X\setminus V$. 
Then $\hat X$ is obtained by attaching the cylinder 
$f^{-1}(c)\times \R_+$ to $f^{-1}((-\infty, c])$. 
We also deform $W$ and $D_{S^1}$ into $\hat W$ and $\hat D_{S^1}$,  
which have translational invariance on the end $\hat V$. 
Let $D$ be an $S^1$-equivariant Dirac-type operator on $\Gamma(\hat W)$ which is translationally invariant on $\hat V$. 
Let $\rho_{V}$ be an $S^1$-invariant cut-off function such that $\rho_V=0$ on $X\setminus V$ and $\rho_V=1$ on the end of $\hat X$. 
For $t\geq  0$ we consider an analog of Witten's deformation  
$D_t:=D+t\rho_V\hat D_{S^1}$. 
We can show that it gives a Fredholm operator on $L^2(\hat W)$ for any $t\gg 1$. 
One of the key is the following estimate. 
See also \cite[Lemma~5.2 and Lemma~5.4]{Fujita-Furuta-Yoshida1}. 


\begin{lemma}\label{vanishing}
There exists a positive constant $T$ such that for any $t>T$ we have 
$$
\|D_ts\|_{L^2}\geq \|s\|_{L^2}
$$ for any compactly supported section $s$ of $\hat W$ whose support 
is contained in $\hat V$. 
\end{lemma}
\begin{proof}
Let $s$ be a compactly supported section $s$ of $\hat W$ whose support 
is contained in $\hat V$. 
By integration by parts we have 
\begin{eqnarray*}
\|D_ts\|_{L^2}^2&=&\int_{\hat V}(D_ts,D_ts)=\int_{\hat V}({D_t}^2s,s)\\
&=&\int_{\hat V}(D^2s,s)+t\int_{\hat V}((D\hat D_{S^1}+\hat D_{S^1}D)s,s)+
t^2\int_{\hat V}((\hat{D}_{S^1})^2s,s)\\
&\geq& t\int_{\hat V}((D\hat D_{S^1}+\hat D_{S^1}D)s,s)+
t^2\int_{\hat V}((\hat D_{S^1})^2s,s). 
\end{eqnarray*}
The anti-commutativity 
(Assumption~\ref{n-acyclic}(3)-(b)) implies that 
the anti-commutator $D\hat D_{S^1}+\hat D_{S^1}D$ is a differential operator 
along the $S^1$-orbits. (See \cite[Lemma~5.10]{Fujita-Furuta-Yoshida1}.) 
By Assumption~\ref{n-acyclic}(4) and 
a priori estimate, there exists a positive constant $C_1$ and $C_2$ 
such that  
$$
C_1\int_{\rm orbit}((\hat D_{S^1})^2s,s)\geq
\left|\int_{\rm orbit}((D\hat D_{S^1}+\hat D_{S^1}D)s,s)\right|
$$ and 
$$ 
C_2\int_{\rm orbit}((\hat D_{S^1})^2s,s)\geq\int_{\rm orbit}(s,s)
$$for all $S^1$-orbits in $\hat V$. 
Note that since $\hat V$ has cylindrical end we may assume that 
$C_1$ and $C_2$ do not depend on the choice of orbits. 
Then we have 
$$
\|D_{t}s\|^2_{L^2}\geq (t^2-C_1t)\int_{\hat V}((\hat D_{S^1})^2s,s)\geq C_2^{-1}(t^2-C_1t)\|s\|_{L^2}^2,  
$$and hence, by taking a positive number $T$ as $T^2-C_1T\geq C_2$ and 
$T\geq C_1/2$, we obtain the required inequality. 
\end{proof}

\begin{remark}\label{remvanishing}
Note that since the principal symbol of $D_t$ is given by 
a combination of the Clifford action, it has finite propagation speed. 
It is well-known that the finite propagation speed implies the essentially self-adjointness. See \cite{Chernoff} for example. 
In particular the inequality in Lemma~\ref{vanishing} holds 
for any $L^2$-sections of $\hat W$ whose supports are contained in $\hat V$, 
and if $\hat X=\hat V$, then we have the vanishing of the space of $L^2$-solutions, i.e.,  
$\ker_{L^2}D_t=\{0\}$.  
\end{remark}

Since $D$, $\rho_{V}$ and $\hat D_{S^1}$ are 
$S^1$-equivariant $S^1$ acts on the space of $L^2$-solutions of $D_ts=0$, we can define the Fredholm index $$\ind_{S^1}(\hat X, \hat V):=\ind(D_t)=\ker(D_t|_{L^2(W^+)})-\ker(D_t|_{L^2(W^-)})$$ as an element of the equivariant $K$-group $K_{S^1}({\rm pt})=R(S^1)$. The index is invariant under the 
continuous deformation of the given data, it does not depend on the choice of $t\gg 1$. Moreover we have the following. 

\begin{prop}
The Fredholm index of $D_t$ does not depend on the choice of 
the completion $\hat X$ of $X$. 
\end{prop}
\begin{proof}
Suppose that there are two completions $\hat X_1$ and $\hat X_2$. 
Namely  for $i=1,2$ let $f_i:X\to \R$ be an $S^1$-invariant 
smooth function and $c_i$ its regular value such that 
$f_i^{-1}((-\infty, c_i])$ is compact and contains $X\setminus V$. 
Then $\hat X_i$ is obtained by attaching the cylinder 
$f_i^{-1}(c_i)\times \R_+$ to $f_i^{-1}((-\infty, c_i])$ whose end is $\hat V_i:=(V\cap f_i^{-1}((-\infty, c_i]))\cup f_i^{-1}(c_i)\times \R_+$. 
It is enough to show for the case 
$f_1^{-1}((-\infty, c_1])\subset f_2^{-1}((-\infty, c_2])$. 
In this case, by the gluing formula of indices (\cite[Lemma~4.8]{Fujita-Furuta-Yoshida2}), 
the index of $\hat X_2$ is the sum of 
the indices of $\hat X_1$ and the completion $\hat X_{12}$ of 
$f_2^{-1}((-\infty, c_2])\setminus f_1^{-1}((-\infty, c_1))$ with an end $\hat V_{12}$ : 
$$
\ind(\hat X_2, \hat V_2)=\ind(\hat X_1,\hat V_1)+\ind(\hat X_{12}, \hat V_{12}). 
$$
Since $f_2^{-1}((-\infty, c_2])\setminus f_1^{-1}((-\infty, c_1))$ is 
contained in $V$, we may take $\hat V_{12}=\hat X_{12}$. Then the space of $L^2$-solutions on $\hat X_{12}$ vanishes by Remark~\ref{remvanishing}. 
It implies that $\ind(\hat X_{12}, \hat V_{12})=0$ and $\ind(\hat X_2, \hat V_2)=\ind(\hat X_1,\hat V_1)$. 
\end{proof}

\begin{definition}\label{equivlocind}
Let $T$ be the positive constant as in Lemma~\ref{vanishing}. 
We define the equivariant index $\ind_{S^1}(X,V)\in K_{S^1}({\rm pt})=R(S^1)$ 
to be the $S^1$-equivariant Fredholm index of $D_t$ on  a completion $(\hat X, \hat V)$ for any $t>T$,  
$$
\ind_{S^1}(X,V):=\ind_{S^1}(\hat X, \hat V)=\ind_{S^1}(D_t). 
$$
\end{definition}

The equivariant index $\ind_{S^1}(X,V)$ satisfies the excision formula, 
gluing formula and product formula.

\section{$S^1$-equivariant local index for symplectic manifolds 
with Hamiltonian $S^1$-action}\label{propermomentmap}
Let $(M,\omega)$ be a (possibly non-compact) symplectic manifold with 
a pre-quantizing line bundle $(L,\nabla)$, i.e., 
$L$ is a Hermitian line bundle over $M$ and $\nabla$ is its Hermitian 
connection whose curvature form is equal to $-\sqrt{-1}\omega$. 
Suppose that the circle group $S^1$ acts on 
$(M,\omega)$ and the action lifts to $(L,\nabla)$. 
Note that for each $x\in M$ the restriction $(L,\nabla)|_{S^1\cdot x}$ is a 
flat line bundle. 
Let $\mu:M\to \R$ be the associated moment map. 
We assume the following compactness. 

\medskip

\noindent
{\bf Assumption.} For each  $n\in\Z$, the inverse image $\mu^{-1}(n)$  
is a compact subset. 

\medskip

\noindent
Take and fix an $S^1$-invariant $\omega$-compatible almost complex structure 
$J$ on $M$ so that we have the associated metric $g^J$ and 
$\Z/2$-graded $Cl(TM)$-module bundle 
$W_L:=\wedge^{\bullet}T^*M^{0,1}\otimes L$.  
We put $V$ to be the complement of the fixed point set, $V:=M\setminus M^{S^1}$. 
Let $T_{S^1}V\to V$ be the tangent bundle along 
the $S^1$-orbits, which is, by definition, a real line bundle  
over $V$. 
Let $E$ be the orthogonal complement of $T_{S^1}V\oplus J(T_{S^1}V)\cong T_{S^1}V\otimes \C$ in $TM|_V$. 
Note that $J(T_{S^1}V)\oplus E$ is the normal bundle of $T_{S^1}V$ and isomorphic to the pull-back of the tangent bundle of the quotient space $V/S^1$. 
We have isomorphisms as Hermitian vector bundles  
\begin{equation}\label{bundle isom}
\wedge^{\bullet}T^*M^{0,1}|_{V}\cong
\wedge^{\bullet}TM|_V\cong (\wedge^{\bullet}T_{S^1}V\otimes\C)\otimes (\wedge^{\bullet}E), 
\end{equation}
and hence, we have 
$$
W_L|_{V}\cong W_{S^1, L}\otimes (\wedge^{\bullet}E),
$$where $W_{S^1, L}:=\wedge^{\bullet}T_{S^1}V\otimes L|_V\cong 
(\wedge^{\bullet}T_{S^1}V)^*\otimes L|_V$ is the family of Clifford module bundle 
over the $S^1$-orbits. 
Let $\bar{D}_{S^1}:\Gamma(W_{S^1, L})\to \Gamma(W_{S^1,L})$ 
be the family of twisted de Rham operators along $S^1$-orbits 
with coefficients in $(L,\nabla)|_{V}$.
Namely $\bar{D}_{S^1}$ is the following degree-one differential operator of order-one. 

\begin{itemize}
\item $\bar{D}_{S^1}$ does not contain any differentials transverse to 
the $S^1$-orbits. 
\item For each $x\in V$ the restriction 
of $\bar{D}_{S^1}$ to the orbit ${S^1\cdot x}$ is the de Rham operator  
acting on $W_{S^1, L}|_{S^1\cdot x}$, 
the Clifford module bundle over $S^1\cdot x$ with coefficients in the flat line bundle $(L,\nabla)|_{S^1\cdot x}$. 
\end{itemize}


Since for each $x\in V$ the restriction $E|_{S^1\cdot x}$ has 
canonical flat structure induced by the $S^1$-action, $\bar D_{S^1}$ naturally induces a 
differential operator along the orbits 
$D_{S^1}:\Gamma(W_L|_V)\to \Gamma(W_L|_V)$. 
Note that for each $x\in V$ we have
$$
\ker(D_{S^1}|_{S^1\cdot x})=\ker(\bar D_{S^1}|_{S^1\cdot x})\otimes 
\wedge^{\bullet} E|_{S^1\cdot x}\cong H^{*}(S^1\cdot x, L|_{S^1\cdot x})\otimes 
\wedge^{\bullet} E|_{S^1\cdot x}
$$by Hodge theory, 
where $H^{*}(S^1\cdot x, L|_{S^1\cdot x})$ is the de Rham cohomology 
with coefficients in the flat line bundle $(L,\nabla)|_{S^1\cdot x}$ 
over the orbit. 
We first recall some basic properties. 

\begin{lemma}\label{basic properties}
\begin{enumerate}
\item For each  $x\in V$, 
the space of global parallel sections \\ $H^0(S^1\cdot x, L|_{S^1\cdot x})$ 
vanishes if and only if 
$\ker(D_{S^1}|_{S^1\cdot x})$ vanishes. 
\item For each  $x\in V$ and $n\in \Z$, 
the multiplicity $H^0(S^1\cdot x, L|_{S^1\cdot x})^{(n)}$ 
vanishes if and only if 
the multiplicity $\ker(D_{S^1}|_{S^1\cdot x})^{(n)}$ vanishes. 
\item If $H^0(S^1\cdot x, L|_{S^1\cdot x})\neq 0$, then we have $\mu(x)\in \Z$. 
In particular we have $\mu(M^{S^1})\subset \Z$. 
\item If $H^0(S^1\cdot x, L|_{S^1\cdot x})\neq 0$, then 
we have $H^0(S^1\cdot x,L|_{S^1\cdot x})=\C_{(\mu(x))}$. 
In particular if $x\in M^{S^1}$, then 
we have $L_x=\C_{(\mu(x))}$. 
\end{enumerate}
\end{lemma}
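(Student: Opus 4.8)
The plan is to reduce everything to the explicit geometry of a single $S^1$-orbit, where the Dirac-type operator along the orbit is, by construction, the de Rham operator twisted by the flat bundle $(L,\nabla)|_{S^1\cdot x}$. First I would parametrize the orbit $S^1\cdot x$ by $S^1$ (or by $\R/\ell\Z$ where $\ell$ depends on the stabilizer), choose a parallel trivialization of $(L,\nabla)$ along the orbit, and write the connection in this trivialization as $d + \sqrt{-1}\,\theta\,dt$ where the holonomy parameter $\theta$ is determined by the pre-quantization condition: the curvature of $\nabla$ is $-\sqrt{-1}\,\omega$, so integrating over a disk bounding the orbit relates $\theta$ to $\mu(x)$ via the moment map equation $\iota_{X_M}\omega = -d\mu$ (where $X_M$ generates the $S^1$-action). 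This is the computational heart, and the one place to be careful about normalization conventions; once it is in place everything else is formal.

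With the flat connection identified, part (1) follows because $D_{S^1}|_{S^1\cdot x}$ is the Dirac operator associated to the de Rham complex with coefficients in a flat line bundle on a circle, whose cohomology in degree zero is exactly $H^0(S^1\cdot x,(L,\nabla)|_{S^1\cdot x})$, the space of global parallel sections; and on a compact manifold (here $S^1$, or a quotient circle) the kernel of the Dirac operator is the full de Rham cohomology, so its vanishing is equivalent to the vanishing of $H^0$ by Poincar\'e duality / Euler characteristic considerations. For part (2) I would note that the identification in part (1) is $S^1$-equivariant: the residual $S^1$-action on $H^0(S^1\cdot x,(L,\nabla)|_{S^1\cdot x})$ (coming from the action on the orbit and its lift to $L$) matches the action on $\ker(D_{S^1}|_{S^1\cdot x})$, so the multiplicities of each weight agree term by term.

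For parts (3) and (4) I would compute $H^0(S^1\cdot x,(L,\nabla)|_{S^1\cdot x})$ directly from the holonomy: a flat line bundle on a circle has nonzero parallel sections if and only if its holonomy is trivial, and in the trivialization above the holonomy is $e^{-\sqrt{-1}\,2\pi\mu(x)}$ (again up to the convention fixed in the first step), so nonvanishing forces $\mu(x)\in\Z$; since fixed points have orbits that are points — here one should instead argue by taking a limit of nearby orbits, or use that the symplectic slice at a fixed point shows $\mu$ is locally constant mod $\Z$ on connected components of $M^{S^1}$ with value forced into $\Z$ by the pre-quantization condition, equivalently $L_x$ carries a well-defined $S^1$-representation whose weight must be integral — we get $\mu(M^{S^1})\subset\Z$. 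When $\mu(x)\in\Z$ the holonomy is trivial, the space of parallel sections is one-dimensional, and the residual $S^1$-action on it is computed by restricting the lifted action: a parallel section transforms under $S^1$ by the character of weight $\mu(x)$, because infinitesimally the action on $L$ along the orbit is $\nabla_{X_M} + \sqrt{-1}\,\mu(x)$ (Kostant's formula) and $\nabla_{X_M}$ kills a parallel section, giving $H^0 = \C_{(\mu(x))}$; specializing to $x\in M^{S^1}$, where the orbit is a point, yields $L_x = \C_{(\mu(x))}$.

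The main obstacle I anticipate is bookkeeping the sign and $2\pi$ conventions consistently across three places — the pre-quantization curvature condition, the moment map normalization $\iota_{X_M}\omega=-d\mu$ (or $+d\mu$), and Kostant's formula for the infinitesimal action on $L$ — so that the weight that comes out is exactly $\mu(x)$ and not $-\mu(x)$ or $\mu(x)$ rescaled. A secondary subtlety is the treatment of orbits with nontrivial (finite) stabilizer, where the orbit is a circle of a different length; this only rescales the holonomy computation and does not affect the integrality conclusion, but it should be mentioned. Everything else is a routine unwinding of definitions, and parts (1) and (2) are immediate consequences of the orbit being compact and the operator being the twisted de Rham operator.
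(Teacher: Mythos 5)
The paper states Lemma~\ref{basic properties} without proof, treating it as a collection of standard facts; there is therefore no ``paper's proof'' to compare against. Your proof sketch is essentially the argument one expects and is correct. The key points you identify are all the right ones: the restriction of $D_{S^1}$ to an orbit is the twisted de~Rham operator of a flat line bundle on a circle, so its $L^2$-kernel is the total de~Rham cohomology $H^0\oplus H^1$; for a flat line bundle on a circle $H^0=0$ iff $H^1=0$ (Euler characteristic or Poincar\'e duality), which gives (1); the identification is manifestly $S^1$-equivariant, giving (2); and (3), (4) follow from the holonomy computation and Kostant's formula, which identifies the residual $S^1$-weight on a parallel section as $\mu(x)$. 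You also correctly anticipate the two places requiring care --- sign/normalization conventions tying together the prequantization condition, the moment map sign, and Kostant's formula; and the rescaling that occurs when the stabilizer $\Gamma_x\subset S^1$ is a nontrivial finite group, where the holonomy around the orbit is a $|\Gamma_x|$-th root of $e^{\pm 2\pi\sqrt{-1}\mu(x)}$, so nonvanishing of $H^0$ still forces $\mu(x)\in\Z$ but the converse needs the extra hypothesis in (4).

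One point your write-up glosses over, which is worth at least a sentence: the bundle $W_L|_{S^1\cdot x}=\bigl(\wedge^{\bullet}T^*M^{0,1}\otimes L\bigr)\big|_{S^1\cdot x}$ has rank $2^{\dim_{\C}M}$, not rank~$2$, so the restricted operator is really the orbit de~Rham operator twisted by $L$ and tensored with a ``normal'' Clifford factor coming from the directions transverse to the orbit. For (1) and (2) to go through as stated, one needs this normal factor, viewed as a flat bundle over the orbit via the $S^1$-invariant extension of normal vectors, to contribute only $S^1$-invariant flat sections --- equivalently, only $\Gamma_x$-invariants of the normal fiber --- so that the weight decomposition of $\ker(D_{S^1}|_{S^1\cdot x})$ is governed entirely by the orbit-direction de~Rham cohomology. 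This is implicit in the paper's phrase ``the restriction of $D_{S^1}$ to each orbit is the de~Rham operator with coefficient in the flat bundle $(L,\nabla)|_{\rm orbit}$'' and in Assumption~2.1(3)(b), but it is the one structural fact your reduction-to-the-orbit step silently assumes and should be made explicit in a full proof.
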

We take an $S^1$-invariant relatively compact open neighborhood $X_{\mu,n}$ 
of the compact set $\mu^{-1}(n)$ as $\mu^{-1}(n)\subset X_{\mu,n} \subset 
\mu^{-1}([n-1/2, n+1/2])$. 
We put $V_{\mu,n}:=X_{\mu,n}\setminus \mu^{-1}(n)$. 

\begin{prop}
$(X_{\mu,n},V_{\mu,n}, W_L|_{V_{\mu,n}}, D_{S^1}|_{V_{\mu,n}})$ is acyclic. 
\end{prop}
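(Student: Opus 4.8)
The plan is to check the four conditions of Assumption~\ref{n-acyclic} for the triple $(X_{\mu,n},V_{\mu,n},D_{S^1}|_{V_{\mu,n}})$, using Lemma~\ref{basic properties} and the fact that $X_{\mu,n}$ was chosen inside $\mu^{-1}([n-1/2,n+1/2])$, where $n$ is the only integer. For condition (1), $X_{\mu,n}\setminus V_{\mu,n}=\mu^{-1}(n)$ is compact by the standing Assumption. For condition (2), from $V_{\mu,n}=X_{\mu,n}\setminus\mu^{-1}(n)\subset\mu^{-1}([n-1/2,n+1/2]\setminus\{n\})$ together with Lemma~\ref{basic properties}(3), which gives $\mu(M^{S^1})\subset\Z$, no $S^1$-fixed point can lie in $V_{\mu,n}$; in particular $V_{\mu,n}\subset M\setminus M^{S^1}$, so $D_{S^1}$ is defined and smooth on $V_{\mu,n}$.

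For condition (3) I would use the operator $D_{S^1}$ described just before the Proposition, the twisted Dirac-type operator along $S^1$-orbits obtained by composing the Dolbeault--Dirac operator on $W_L$ with the orthogonal projection $TM|_{M\setminus M^{S^1}}\to T(S^1\text{-orbit})$. It is formally self-adjoint, and by construction it differentiates only along the orbit directions, with restriction to each orbit equal to the de Rham operator twisted by the flat bundle $(L,\nabla)|_{\rm orbit}$, which is of Dirac type; this is (3)(a). For (3)(b), along an orbit write $TM=T(S^1\cdot x)\oplus N$; the orbit operator is built from Clifford multiplication by the unit generator of $T(S^1\cdot x)$ composed with the covariant derivative along the orbit, and by the Clifford relation $c(v)c(w)+c(w)c(v)=-2\langle v,w\rangle$ this anti-commutes with $c(\tilde u)$ for $\tilde u$ a section of $N$, the remaining interaction with the covariant derivative being controlled by the $S^1$-invariance of $\tilde u$. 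This is exactly the situation treated in the general framework of \cite{Fujita-Furuta-Yoshida1}, so here it suffices to observe that our $D_{S^1}$ fits into that framework.

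Finally, condition (4) is where Lemma~\ref{basic properties} does the real work. For $x\in V_{\mu,n}$, Lemma~\ref{basic properties}(1) says $\ker(D_{S^1}|_{S^1\cdot x})=0$ if and only if $H^0(S^1\cdot x,(L,\nabla)|_{S^1\cdot x})=0$. If this space were nonzero, Lemma~\ref{basic properties}(3) would force $\mu(x)\in\Z$; but $x\in V_{\mu,n}$ forces $\mu(x)\in[n-1/2,n+1/2]\setminus\{n\}$, which contains no integer. Hence $\ker(D_{S^1}|_{S^1\cdot x})=0$ for every $x\in V_{\mu,n}$, and the triple is acyclic.

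The only step that is not pure bookkeeping is (3)(b): one must make sure that the projected operator $D_{S^1}$ genuinely anti-commutes with Clifford multiplication in every normal direction, including the coupling between the covariant derivative along the orbit and $c(\tilde u)$. Everything else reduces at once to Lemma~\ref{basic properties} and the trivial observation that $\mu^{-1}([n-1/2,n+1/2])$ meets $\Z$ only at $n$.
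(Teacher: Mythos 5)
Your proof is correct and follows essentially the same route as the paper: conditions (1) and (2) reduce to the choice $X_{\mu,n}\subset\mu^{-1}([n-1/2,n+1/2])$ and Lemma~\ref{basic properties}(3), and condition (4) follows from Lemma~\ref{basic properties}(1) and (3) exactly as you argue. The one place where you are looser than the paper is (3)(b): the paper pins down the anti-commutation by noting that $D_{S^1}$ restricts on each orbit to a de~Rham operator twisted by a flat line bundle, and that the $S^1$-invariant extension $\tilde u$ of a normal vector is \emph{parallel} along the orbit (because parallel transport around an orbit of constant speed coincides with the $S^1$-action); the only potentially offending term $c(e)c(\nabla_e\tilde u)$ in the anti-commutator therefore vanishes. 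Your phrase ``the remaining interaction with the covariant derivative being controlled by the $S^1$-invariance of $\tilde u$'' is gesturing at exactly this, but it would be worth stating the parallelism explicitly rather than deferring wholesale to the framework of \cite{Fujita-Furuta-Yoshida1}.
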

\begin{proof}
Since $\mu(V_{\mu,n})\cap \Z=\emptyset$ we have 
$(V_{\mu,n})^{S^1}=\emptyset$ and 
$\ker(D_{S^1}|_{S^1\cdot x})=0$ for all $x\in V_{\mu,n}$ by (1) and (3) in 
Lemma~\ref{basic properties}. 
For each normal tangent vector $u=u_1+u_2\in J(T_{S^1}V)_x\oplus E_x$ to the orbit $S^1\cdot x$, let $S^1\cdot u=S^1\cdot u_1+S^1\cdot u_2$ be the induced vector field along the orbit. 
Note that the vector field $J(S^1\cdot u_1)$ is parallel with respect to the Levi-Civita connection of the induced metric on $S^1\cdot x$ and 
the Clifford multiplication of $S^1\cdot u_1$ on $W|_{S^1\cdot x}$ is identified with that of $J(S^1\cdot u_1)$ on $T(S^1\cdot x)$ under the identification (\ref{bundle isom}). 
Since $S^1\cdot u_2$ is normal to $S^1\cdot x$ and the restriction $D_{S^1}|_{S^1\cdot x}$ is the de Rham operator with coefficient in $(L,\nabla)|_{S^1\cdot x}\otimes E|_{S^1\cdot x}$, it anti-commutes with the Clifford multiplication 
of $S^1\cdot u$. 
\end{proof}


We can define  the index 
$\ind_{S^1}(X_{\mu,n},V_{\mu,n})\in R(S^1)$ as in Definition~\ref{equivlocind} 
by applying the construction in Section~\ref{boundary}.  

\begin{prop}
The index $\ind_{S^1}(X_{\mu,n}, V_{\mu,n})$ does not depend on the choice 
of $X_{\mu,n}$. 
\end{prop}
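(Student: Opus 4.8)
The plan is to show that any two choices $X_{\mu,n}$ and $X'_{\mu,n}$ of invariant relatively compact open neighborhoods of $\mu^{-1}(n)$ give the same index, by reducing to a situation where one is contained in the other. First I would observe that $X''_{\mu,n} := X_{\mu,n} \cap X'_{\mu,n}$ is again an $S^1$-invariant relatively compact open neighborhood of $\mu^{-1}(n)$ contained in $\mu^{-1}([n-1/2,n+1/2])$, so that $(X''_{\mu,n}, V''_{\mu,n}, D_{S^1}|_{V''_{\mu,n}})$ is acyclic by the previous proposition. Hence it suffices to treat the case $X'_{\mu,n} \subset X_{\mu,n}$, i.e. to show that enlarging the neighborhood does not change the index. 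The key point is that the difference region $X_{\mu,n} \setminus \overline{X'_{\mu,n}}$ lies entirely inside $V_{\mu,n}$, on which $D_{S^1}$ has trivial kernel along every orbit; intuitively, adding an already-acyclic collar should contribute nothing.

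Concretely, I would invoke the excision and sum formulas for $\ind_{S^1}$ recalled at the end of Section~\ref{boundary} (proved in \cite{Fujita-Furuta-Yoshida1}). The strategy is to write $X_{\mu,n}$ as glued from $X'_{\mu,n}$ and a collar $N := X_{\mu,n} \setminus K$, where $K$ is a suitable invariant compact subset of $X'_{\mu,n}$ containing $\mu^{-1}(n)$, with overlap inside $V_{\mu,n} \cap V'_{\mu,n}$. On the piece $N$ the pair is of the form $(N, N, D_{S^1}|_N)$ with the \emph{entire} manifold acyclic, so its contribution to the index vanishes — this is the normalization that the construction of \cite{Fujita-Furuta-Yoshida1} is designed to give, since the perturbed operator $D + t\rho_N \hat D_{S^1}$ becomes invertible for $t \gg 0$ when the fiberwise kernel vanishes everywhere. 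Applying the sum formula to this decomposition then yields $\ind_{S^1}(X_{\mu,n}, V_{\mu,n}) = \ind_{S^1}(X'_{\mu,n}, V'_{\mu,n})$.

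Alternatively — and perhaps more cleanly — one can argue by a direct deformation: the index $\ind_{S^1}(X_{\mu,n}, V_{\mu,n})$ was defined using a completion $\hat X$, an extended operator $\hat D_{S^1}$, and a cut-off $\rho_{V_{\mu,n}}$, and it was already noted in Section~\ref{boundary} that the result is independent of all these auxiliary choices. Choosing the completion and the cut-off function for the larger $X_{\mu,n}$ so that they restrict to admissible data for $X'_{\mu,n}$, one sees that the two Fredholm operators agree outside the acyclic region and a linear homotopy between the two cut-off functions stays within acyclic perturbations, so the index is constant along the homotopy. Either way, the main obstacle is bookkeeping: one must check that all the invariance and gluing properties quoted from \cite{Fujita-Furuta-Yoshida1} genuinely apply in the $S^1$-equivariant, non-compact setting here — in particular that the compactness Assumption on $\mu$ guarantees the perturbed operators are Fredholm on the relevant pieces — but no essentially new analytic input is required beyond what Section~\ref{boundary} provides.
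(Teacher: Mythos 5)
Your proposal is correct and follows essentially the same route as the paper: reduce to the nested case $X'_{\mu,n}\subset X_{\mu,n}$ (you justify the reduction cleanly via the intersection $X_{\mu,n}\cap X'_{\mu,n}$, which the paper leaves implicit) and then invoke the excision formula for $\ind_{S^1}(\cdot,\cdot)$ from \cite{Fujita-Furuta-Yoshida1}. The collar/sum-formula decomposition and the deformation alternative you sketch are reasonable elaborations of the one-line ``follows from the excision formula'' that the paper gives.
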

\begin{proof}
Suppose that we take two relatively compact neighborhoods $X_{\mu,n}$ and $X_{\mu,n}'$ of $\mu^{-1}(n)$ with the required properties. 
It is enough to show that if $X_{\mu,n}\subset X_{\mu,n}'$, then we have 
$\ind_{S^1}(X_{\mu,n},V_{\mu,n})=\ind_{S^1}(X_{\mu,n}',V_{\mu,n}')$. 
The equality follows from the 
excision formula of $\ind_{S^1}(\cdot,\cdot)$. 
\end{proof}

\begin{definition}
For each $n\in \Z$, we put  $RR_{S^1, {\loc}}^{(n)}(M,L):=\ind_{S^1}(X_{\mu,n},V_{\mu,n})^{(n)}\in \Z$ and 
define $RR_{S^1, {\loc}}(M,L)\in {\rm Hom}(R(S^1), \Z)$ 
by putting  
$$
RR_{S^1, {\loc}}(M,L) : \C_{(n)} \mapsto 
RR^{(n)}_{S^1, {\loc}}(M,L). 
$$
We call $RR_{S^1, {\loc}}(M,L)$ the {\it $S^1$-equivariant local Riemann-Roch number.}
\end{definition}

\begin{remark}
In contrast to the equivariant index considered in \cite{Yoshida3}, 
the $S^1$-equivariant local Riemann-Roch number $RR_{S^1, {\loc}}$ 
can be defined even if $\mu(M)$ contains infinitely many integral points. 
For such case the number $RR^{(n)}_{S^1, {\loc}}(M,L)$ may be non-zero 
for infinitely many $n$. 
We do not know whether $S^1$-equivariant local Riemann-Roch number 
$RR_{S^1, {\loc}}(M,L)$ has distributional nature or not.  
\end{remark}

\begin{remark}
Using the equivariant version of the 
{\it acyclic compatible systems} in \cite{Fujita-Furuta-Yoshida2} 
it would be possible to define the equivariant local index 
$RR_{G, {\loc}}^{(\xi)}(M,L)$ and $RR_{G,loc}(M,L)$ 
for any compact torus $G$ and $\xi$ in the weight lattice of $G$. 
\end{remark}

Suppose that $M$ is closed, i.e., compact manifold without boundary. 
For the $S^1$-equivariant  data $(M,\omega,L,\nabla)$, the $S^1$-equivariant 
Riemann-Roch number $RR_{S^1}(M,L)$ is defined 
as the index of the $S^1$-equivariant 
spin$^c$ Dirac operator twisted by $L$. 

\begin{theorem}
If $M$ is a closed symplectic manifold, then 
we have 
$$
RR_{S^1, {\loc}}(M,L)=RR_{S^1}(M,L), 
$$where the right hand side is regarded as a functional on $R(S^1)$, 
$$
RR_{S^1}(M,L) : \C_{(n)}\mapsto RR_{S^1}(M,L)^{(n)}. 
$$
\end{theorem}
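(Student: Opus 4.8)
The plan is to reduce the global equality to the local building blocks $\ind_{S^1}(X_{\mu,n},V_{\mu,n})$ via a Mayer--Vietoris / sum-formula argument, and then to identify each local contribution with the corresponding weight-$n$ part of $RR_{S^1}(M,L)$ by a fixed-point (Atiyah--Bott--Segal--Singer) computation. First I would use the compactness of $M$ to choose finitely many integers $n_1,\dots,n_k$ with $\mu(M^{S^1})\subset\{n_1,\dots,n_k\}$ (this is legitimate by Lemma~\ref{basic properties}(3)) and pick the relatively compact neighbourhoods $X_{\mu,n_j}$ so that $M\setminus\bigcup_j X_{\mu,n_j}$ is a compact manifold on which $S^1$ acts without fixed points and on which $D_{S^1}$ is defined with trivial kernel along every orbit. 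On that complement the perturbed operator $D+t\rho\hat D_{S^1}$ has no kernel or cokernel for $t\gg 0$, so it contributes $0$ to the index; the sum formula (cited at the end of Section~2) then gives
$$
RR_{S^1}(M,L)=\sum_{j=1}^{k}\ind_{S^1}(X_{\mu,n_j},V_{\mu,n_j})
$$
as elements of $R(S^1)$. One then extracts the weight-$n$ multiplicity of both sides.

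Next I would pin down which weights can appear in each local piece. The point is that $\ind_{S^1}(X_{\mu,n},V_{\mu,n})$ is supported, as far as nonzero multiplicities go, only in weights ``close to $n$''. More precisely, because $X_{\mu,n}\subset\mu^{-1}([n-1/2,n+1/2])$ and the perturbation kills everything where $D_{S^1}$ has trivial kernel along orbits, a standard estimate on the perturbed Dirac operator (as in \cite{Fujita-Furuta-Yoshida1}) shows that the kernel of $D+t\rho\hat D_{S^1}$ concentrates near $\mu^{-1}(n)$; combined with Lemma~\ref{basic properties}(4) — which forces the only surviving line-bundle weight near the fixed locus in $\mu^{-1}(n)$ to be $n$ itself — this gives $\ind_{S^1}(X_{\mu,n},V_{\mu,n})^{(m)}=0$ for $m\ne n$, at least after shrinking $X_{\mu,n}$. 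Hence $RR^{(n)}_{S^1,loc}(M,L)=\ind_{S^1}(X_{\mu,n},V_{\mu,n})^{(n)}$ equals the full multiplicity $\ind_{S^1}(X_{\mu,n},V_{\mu,n})^{(n)}=\sum_j\ind_{S^1}(X_{\mu,n_j},V_{\mu,n_j})^{(n)}$ once we know the off-diagonal pieces vanish in weight $n$ too — which is the same concentration statement applied with $n_j\ne n$. Putting this into the sum formula yields $RR^{(n)}_{S^1,loc}(M,L)=RR_{S^1}(M,L)^{(n)}$ for every $n$, which is the claim.

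The remaining, and I expect hardest, step is the local identification itself: showing $\ind_{S^1}(X_{\mu,n_j},V_{\mu,n_j})=RR_{S^1}(M,L)|_{\text{weights near }n_j}$, i.e. that the perturbed index localises on $\mu^{-1}(n_j)$ to exactly the part of the fixed-point contributions coming from fixed components inside $\mu^{-1}(n_j)$. I would do this by an excision argument combined with an explicit model computation near a fixed component: deform $(M,\omega,J,L)$ in an $S^1$-invariant way near $\mu^{-1}(n_j)$ to a normal-bundle model, where the perturbed Dirac operator decouples into a transverse harmonic-oscillator-type operator and the de Rham operator along orbits, and compare with the untwisted computation of Section~2 in \cite{Fujita-Furuta-Yoshida1, Fujita-Furuta-Yoshida3}. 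The key analytic input is that the spectral gap created by $t\rho_V\hat D_{S^1}$ is uniform on the noncompact end, so that the index is genuinely computed by the compact core near $\mu^{-1}(n_j)$; granting that, the fixed-point formula on the closed manifold $M$ organises the $RR_{S^1}(M,L)^{(n)}$ as a sum over fixed components, and the local index picks out precisely the summands whose fixed component sits over $n_j$. Matching these two decompositions termwise completes the proof. The subtlety to watch is the contribution of orbits in $V_{\mu,n_j}$ on which $\mu$ is close to but not equal to an integer: Lemma~\ref{basic properties}(1),(3) guarantees these contribute nothing, and this is exactly what makes the excision/sum bookkeeping close up cleanly.
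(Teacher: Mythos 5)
Your overall strategy — decompose the global index into local contributions at the integer levels of $\mu$ and then match them weight by weight — is in the right neighbourhood, but it differs from the paper's proof in an important way and has a gap at its crux.

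The paper does not try to establish the $R(S^1)$-valued sum formula
$RR_{S^1}(M,L)=\sum_j \ind_{S^1}(X_{\mu,n_j},V_{\mu,n_j})$
and then argue concentration. Instead, for a fixed weight $n$, it applies the shifting trick $RR_{S^1}(M,L)^{(n)}=RR_{S^1}(M,L\otimes\C_{(-n)})^{(0)}$, uses the localization theorem for $S^1$-acyclic compatible systems (Theorem~2.41 of Fujita--Furuta--Yoshida~III) applied to the weight-$0$ multiplicity to write this as a sum of weight-$0$ local indices attached to the fixed-point components at each integer level, and then invokes the vanishing theorem (Theorem~4.1 of the same reference) to kill all levels except $k=0$, using Lemma~3.1(4) to compute the weight of $L\otimes\C_{(-n)}$ on the fixed locus. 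Everything is conducted at the level of a single multiplicity rather than in $R(S^1)$, which is what makes the bookkeeping close up.

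The main gap in your proposal is the claim that $\ind_{S^1}(X_{\mu,n},V_{\mu,n})^{(m)}=0$ for $m\ne n$. This assertion is actually true, but your justification — ``the kernel concentrates near $\mu^{-1}(n)$, and $L$ has weight $n$ over the fixed locus there, so only weight $n$ survives'' — does not work as stated. The $S^1$-representation on the kernel of the perturbed operator is not governed solely by the weight of $L$ on the fixed locus: it also sees the weights of the normal bundle to the fixed components (this is precisely why the Atiyah--Bott contributions involve infinitely many weights). Concentration of the kernel near $\mu^{-1}(n)$ therefore does \emph{not} immediately give concentration at weight $n$ in $R(S^1)$. To prove the vanishing of the $m\ne n$ multiplicities you need exactly the vanishing theorem: writing $\ind_{S^1}(X_{\mu,n},V_{\mu,n})^{(m)}=\ind_{S^1}(X_{\mu,n},V_{\mu,n},W_L\otimes\C_{(-m)})^{(0)}$ and observing that on the fixed locus in $X_{\mu,n}$ the twisted bundle has weight $n-m\ne0$, Theorem~4.1 of Fujita--Furuta--Yoshida~III gives the vanishing. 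Once you insert this, your route is sound — but at that point you are invoking the same two ingredients (shift trick and vanishing theorem) the paper uses, and the paper's version, which applies the localization theorem directly to the weight-$0$ multiplicity, avoids the need to first assemble a global $R(S^1)$-valued sum.

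One further remark: your last paragraph (``the remaining, and I expect hardest, step is the local identification'') is redundant given the first two steps — if the sum formula and the concentration both hold, the termwise match is immediate, so no further normal-form computation is required for this theorem. The normal-form argument is used in the paper only for the quantization theorem in Section~4, not here.
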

\begin{proof}
We show $RR_{S^1, {\loc}}^{(n)}(M,L)=RR_{S^1}(M,L)^{(n)}$ for 
each $n\in \Z$. 
By (2) and (4) in Lemma~\ref{basic properties} we have 
$\ker(D_{S^1}|_{S^1\cdot x})^{(n)}=H^0(S^1\cdot x, (L,\nabla)|_{S^1\cdot x})^{(n)}=0$, 
for each $x\notin X_{\mu,n}$, and hence, by 
{\it shifting trick} and the localization theorem 
for $S^1$-acyclic compatible system~(\cite[Theorem~2.41]{Fujita-Furuta-Yoshida3}), 
we have 
\begin{eqnarray*}
RR_{S^1}(M,L)^{(n)}&=&
RR_{S^1}(M,L\otimes \C_{(-n)})^{(0)}\\
&=&\ind_{S^1}(X_{\mu-n,0},V_{\mu-n,0},W_L\otimes \C_{(-n)}|_{X_{\mu-n,0}})^{(0)}\\
&&+\sum_{k\neq 0}\ind_{S^1}(X'_{\mu-n,k},V'_{\mu-n,k}, W_L\otimes 
\C_{(-n)}|_{X'_{\mu-n,k}})^{(0)}, 
\end{eqnarray*}
where $X'_{\mu-n,k}$ is an $S^1$-invariant relatively compact open neighborhood of 
$(\mu-n)^{-1}(k)\cap M^{S^1}=\mu^{-1}(n+k)\cap M^{S^1}$ and we put 
$V'_k:=X'_k\setminus \mu^{-1}(k)\cap M^{S^1}$. 
On the other hand 
we have $L_{x}=\C_{(k)}$ for each $x\in \mu^{-1}(k)\cap M^{S^1}$ 
by (4) in Lemma~\ref{basic properties}. 
We can apply the vanishing theorem 
(\cite[Theorem~4.1]{Fujita-Furuta-Yoshida3}) and we have 
$\ind_{S^1}(X'_{\mu-n, k},V'_{\mu-n, k}, 
W_L\otimes \C_{(-n)}|_{X'_{\mu-n,k}})^{(0)}=0$ for all $k\neq 0$.  
Note that we may assume that $X_{\mu-n,0}=X_{\mu,n}$ and 
$V_{\mu-n,0}=V_{\mu,n}$. 
So we have $RR_{S^1}(M,L)^{(n)}=
\ind_{S^1}(X_{\mu,n},V_{\mu,n},W_L\otimes \C_{(-n)}|_{X_{\mu,n}})^{(0)}
=RR_{S^1, {\loc}}^{(0)}(M,L\otimes\C_{(-n)})=
RR^{(n)}_{S^1, {\loc}}(M,L)$. 
\end{proof}
\section{Quantization conjecture for $RR_{S^1, {\loc}}$}
Let $(M,\omega), (L,\nabla)$ and $\mu$ be the 
data as in Section~\ref{propermomentmap}. 
Namely $(M,\omega)$ is a symplectic manifold and 
$(L,\nabla)$ is a pre-quantizing line bundle with Hamiltonian 
$S^1$-action whose moment map is $\mu$. 
We assume that 
$\mu^{-1}(n)$ is a compact subset for each $n\in\Z$. 
Suppose that an integer $n$ is a regular value of $\mu$. 
Then we have a closed symplectic orbifold 
$M_{(n)}:=\mu^{-1}(n)/S^1$ with the pre-quantizing line bundle 
$L_{(n)}:=(L\otimes \C_{(-n)},\nabla)|_{\mu^{-1}(n)}/S^1$. 
One can define the Riemann-Roch number $RR(M_{(n)}, L_{(n)})$ 
of the pre-quantized symplectic orbifold $M_{(n)}$ 
as the index of the spin$^c$ Dirac operator twisted by $L_{(n)}$. 
\begin{theorem}
If an integer $n\in \Z$ is a regular value of $\mu$, 
then we have 
$$
RR^{(n)}_{S^1, {\loc}}(M,L)=RR(M_{(n)},L_{(n)}). 
$$
\end{theorem}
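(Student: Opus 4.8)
The plan is to reduce to $n=0$ by the shifting trick and then realise $RR^{(0)}_{S^1,loc}(M,L)$ as the index on the symplectic reduction. First, since $\mu-n$ is a moment map for the induced $S^1$-action on $(L\otimes\C_{(-n)},\nabla)$, the reduction of $(M,\omega,L\otimes\C_{(-n)})$ at the regular value $0$ is exactly $(M_{(n)},L_{(n)})$; and since $\ind_{S^1}(X_{\mu,n},V_{\mu,n},W_L\otimes\C_{(-n)})=\ind_{S^1}(X_{\mu,n},V_{\mu,n},W_L)\otimes\C_{(-n)}$, taking weight-$0$ parts gives $RR^{(n)}_{S^1,loc}(M,L)=RR^{(0)}_{S^1,loc}(M,L\otimes\C_{(-n)})$. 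So it suffices to treat $n=0$. Put $P:=\mu^{-1}(0)$, a compact submanifold on which $S^1$ acts locally freely because $0$ is regular, and $B:=M_{(0)}=P/S^1$, a closed symplectic orbifold. Since $\ind_{S^1}(X_{\mu,0},V_{\mu,0})$ is independent of $X_{\mu,0}$ and satisfies the excision formula, I may take $X_{\mu,0}=\mu^{-1}((-\epsilon,\epsilon))$ with $(-\epsilon,\epsilon)$ consisting of regular values. The flow of an $S^1$-invariant vector field $\eta$ with $d\mu(\eta)=1$ then yields an $S^1$-equivariant diffeomorphism $X_{\mu,0}\cong P\times(-\epsilon,\epsilon)$ carrying $\mu$ to the projection, with $S^1$ acting only on $P$; hence $X_{\mu,0}$ is an orbifold $S^1$-bundle over $B\times(-\epsilon,\epsilon)$ whose equivariant completion is $\hat X=P\times\R\to B\times\R$.

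Next I would analyse the Clifford module on this model. Fixing a connection on $P\to B$ gives a splitting $TX_{\mu,0}=\pi^*TB\oplus N$, where $N$ is the rank-two subbundle spanned by the generating vector field and $\partial_s$; one checks that $S^1$ acts trivially on $N$ and that $N$ is a trivial complex line bundle. By Lemma~\ref{basic properties}(4) the fibrewise-parallel sections of $(L,\nabla)|_P$ have weight $0$, so $L\cong\pi^*L_{(0)}$ as complex line bundles. After an $S^1$-equivariant homotopy of the $\omega$-compatible almost complex structure to one adapted to this splitting, which does not change the index, one obtains
$$
W_L\;\cong\;\pi^*W^B_{L_{(0)}}\otimes\wedge^\bullet\bar N^{\,*},\qquad W^B_{L_{(0)}}:=\wedge^\bullet T^*B^{0,1}\otimes L_{(0)},
$$
and the perturbation $D_{S^1}$ acts only in the $N$-direction, along the orbits.

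Then I would apply a product decomposition: rescaling the metric in the $B$-directions (an adiabatic limit), or equivalently the fibered form of the product formula of \cite{Fujita-Furuta-Yoshida1} used in \cite{Fujita-Furuta-Yoshida2,Fujita-Furuta-Yoshida3}, should give
$$
\ind_{S^1}(X_{\mu,0},V_{\mu,0},W_L)\;=\;\ind(D_B)\cdot\ind_{S^1}(\text{fibre model}),
$$
where $D_B$ is the spin$^c$ Dirac operator of $W^B_{L_{(0)}}$ on the closed orbifold $B$, so that $\ind(D_B)=RR(M_{(0)},L_{(0)})\in\Z$, and it lies in weight $0$ since $S^1$ acts trivially on $B$. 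The fibre model is the $S^1$-equivariant local index on $S^1\times\R$ (an orbit times the normal line) of the $\wedge^\bullet\bar N^{\,*}$-valued Dirac operator, perturbed along the $S^1$-factor and twisted by the flat family $(L,\nabla)|_{\text{orbit}}$. At level $0$ the twisting is trivial, so $\ker D_{S^1}$ there is one-dimensional and of weight $0$ by Lemma~\ref{basic properties}(4), and the explicit computation of Appendix~A shows that the weight-$0$ part of this fibre index equals $1$. Hence the weight-$0$ part of $\ind_{S^1}(X_{\mu,0},V_{\mu,0},W_L)$ is $RR(M_{(0)},L_{(0)})$, and together with the shifting trick this gives the claim.

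The step I expect to be hardest is the product decomposition together with the matching of the $\Z/2$-gradings, so that the factor $RR(M_{(0)},L_{(0)})$ enters with the right sign rather than its negative: this needs careful bookkeeping of the $\wedge^\bullet\bar N^{\,*}$ factor and of the Clifford action of $\partial_s$, and is where one would invoke the localization theorem \cite[Theorem~2.41]{Fujita-Furuta-Yoshida3} in its family form, or run the adiabatic argument directly. A minor point is that $P\to B$ is merely an orbifold $S^1$-bundle, with orbits of finite stabiliser; this is harmless, since Lemma~\ref{basic properties} applies to every orbit and $RR(M_{(0)},L_{(0)})$ is understood as the orbifold Riemann--Roch number.
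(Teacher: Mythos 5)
Your proposal is correct and takes essentially the same route as the paper: excision to a tubular neighborhood of $\mu^{-1}(n)$, the normal form $\mu^{-1}(n)\times_{S^1}T^*S^1$, the product formula, and the observation that the $S^1$-invariant part of the fibre model on $T^*S^1=S^1\times\R$ equals $1$. The paper simply cites the normal form theorem (\cite[Proposition~5.11]{Fujita-Furuta-Yoshida3}) and the product formula from \cite{Fujita-Furuta-Yoshida1} in one line, while you unfold the splitting of the Clifford module, the shifting to level~$0$, and the adiabatic form of the product argument; these are elaborations of, not departures from, the same proof.
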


\begin{proof}
By the excision formula, the index 
$RR^{(n)}_{S^1,loc}(M,L)=\ind_{S^1}(X_{\mu,n}, V_{\mu,n}, W_L|_{X_{\mu,n}})^{(n)}$ 
is localized at any neighborhood of $\mu^{-1}(n)$. 
On the other hand by the normal form theorem (e.g., 
\cite[Proposition~5.11]{Fujita-Furuta-Yoshida3}) we may assume that the neighborhood has the form
$\mu^{-1}(n)\times_{S^1}T^*S^1$.   
Since the $S^1$-invariant part of the index of $T^*S^1=S^1\times \R$ with 
the standard structure is equal to 1, 
we have $$RR^{(n)}_{S^1}(M,L)
=\ind_{S^1}(X_{\mu,n}, V_{\mu,n}, W_L|_{X_{\mu,n}})^{(n)}
=\ind(\mu^{-1}(n)/S^1, W_{L_{(n)}})=
RR(M_{(n)},L_{(n)})$$ by the product formula. 
\end{proof}

\begin{remark} Kirwan~\cite{Kirwan} and Meinrenken-Sjamaar~\cite{Meinrenken-Sjamaar} 
gave definitions of $RR(M_{(n)},L_{(n)})$ 
for a critical value $n$ of $\mu$. 
We do not understand relation between 
them and $RR^{(n)}_{S^1, {\loc}}(M,L)$. 
\end{remark}

\section{Relation with the transverse index}
Vergne~\cite{Vergne3} gave a formulation of quantization conjecture for 
non-compact symplectic manifolds with Hamiltonian action of a general compact Lie group $G$, in which the compactness of the zero set of the induced vector field (Kirwan vector filed) is assumed. 
Her conjecture concerns with the {\it transverse index} 
which was defined by Atiyah~\cite{Atiyah1} and studied by Paradan-Vergne~\cite{ParadanVergne}. 
Her conjecture was proved by Ma-Zhang~\cite{Ma-Zhang,Ma-Zhangarxiv} and Paradan gave a
new proof in \cite{Paradan2}.  
In \cite{Ma-Zhangarxiv} they defined an equivariant index 
$Q(L):R(G)\to \Z$ under the weaker assumption, the properness of the 
moment map, and showed that the quantization conjecture for $Q(L)$.  
Namely for each irreducible representation $\rho$ of $G$, 
the number $Q(L)(\rho)$ is equal to the Riemann-Roch number of the 
symplectic quotient. 
They used the index theorem due to Braverman \cite{Braverman}.  
He showed that a perturbation of Dirac operator 
gives an analytic realization of the transverse index $\chi_G(M)=\chi_{G}(M,\mu)$. 
The perturbation term is 
the Clifford action of the induced vector field.  
If the induced vector field has compact zero set, 
then the equivariant index $Q(L)$ is equal to the transverse index $\chi_G(M)$. 
Since both equivariant indices $Q(L)$ and $RR_{S^1, {\loc}}(M,L)$ 
satisfy the quantization conjecture, we have the following. 

\begin{prop}
Let $(M,\omega,L,\nabla)$ be a pre-quantized symplectic manifold 
equipped with a Hamiltonian $S^1$-action. 
If the moment map $\mu$ is proper and an integer $n$ is a regular value of $\mu$, then we have 
$$
Q(L)(\C_{(n)})=\chi_{S^1}(M,\mu)(\C_{(n)})=RR_{S^1, {\loc}}^{(n)}(M,L). 
$$
In particular if $\mu$ is proper and it does not have any critical points, 
then we have 
$$
Q(L)=\chi_{S^1}(M,\mu)=RR_{S^1, {\loc}}(M,L)
$$  as functionals on $R(S^1)$. 
\end{prop}

We do not know any direct proof of the second equality 
which does not use the quantization conjecture. 
On the other hand the following example implies that 
our equivariant index $RR_{S^1, {\loc}}(M,L)$ has  
different behaviour from the transverse index. 

\begin{example}
Let $m$ be a non-zero integer and 
$M$ the product of the circle $S^1$ and a small interval centered at $m$. 
Consider the standard metric and 
the symplectic structure on $M$. 
Let $L$ be the trivial complex line bundle over $M$ 
which is equipped with a structure of pre-quantizing line bundle over $M$. 
Consider the natural $S^1$-action on $M$, and we take 
its lift to $L$ so that $S^1$ acts trivially on the fiber direction. 
One has the associated moment map $\mu$ 
which is equal to the projection to the interval factor. 
Since $m$ is non-zero $\mu$ does not have neither critical points nor zeros, 
and hence, the associated vector field $\mu^M$  on $M$ does not vanish. 
Then  \cite[Lemma~3.12]{Braverman} 
implies that the associated transverse index $\chi_{S^1}(M,\mu)$ vanishes. 
(In fact one can check that  
the kernel of the perturbation of the Dirac operator by $\mu^M$ vanishes 
by the direct computation.) 
On the other hand one can check that the kernel 
of the perturbation by $D_{S^1}$ is one dimensional and it is isomorphic to 
$\C_{(n)}$, hence, we have 
$RR^{(n)}_{S^1, {\loc}}(M,L)=\delta_{mn}$. 
In particular we have $RR_{S^1, {loc}}(M,L)\neq \chi_{S^1}(M,\mu)$. 
See Appendix~\ref{appendixa} for details of the computation. 
\end{example}

\appendix
\section{Perturbations on the cylinder and some computations}\label{appendixa}
In this appendix we give some details of the computations of the kernel of 
the perturbed Dirac operator on the cylinder. 
We consider a family of perturbations 
which includes perturbations used in \cite{Braverman}, \cite{Ma-Zhang} and \cite{Fujita-Furuta-Yoshida1}. 
\subsection{Setting}
\begin{enumerate}
\item $m$ : integer 
\item $M=\R\times S^1$ with coordinate functions $(r,\theta)$ 
\item $g=dr^2+d\theta^2$ : Riemannian metric 
\item $\omega=dr\wedge d\theta$ : symplectic structure 
\item $J : \partial_r\mapsto \partial_{\theta}, \quad 
\partial_{\theta}\mapsto -\partial_r $ : almost complex structure 
\item We use $\partial_{\theta}$ as a frame of $TM_{\C}=(TM,J)$. 
\item $W^+=M\times\C$, \ $W^-=TM_{\C}$, \ $W=W^+\oplus W^-$ 
\item $c:T^*M\to {\rm End}(W)$ : Clifford action defined by 
$$c(dr)=
\begin{pmatrix}
0 & -\sqrt{-1} \\ 
-\sqrt{-1} & 0 
\end{pmatrix}, \quad 
c(d\theta)=
\begin{pmatrix}
0 & -1 \\ 
1 & 0
\end{pmatrix} 
$$
\item $\rho : \R\to \R$ : 
smooth non-decreasing function with 
$$
\rho(r)=\left\{ 
\begin{array}{lll}
r \quad (m-1/4 < r < m+1/4)) \\ 
m - 1/2  \quad (r<m-1/2) \\
m + 1/2 \quad (r>m+1/2) 
\end{array}\right.
$$
\item $\nabla^W=d-2\pi\rho(r)
\begin{pmatrix}
1 & 0 \\ 
0 & 1
\end{pmatrix}d\theta$ : 
Clifford connection of $W$ 
\item 
$D:\Gamma(W)\to \Gamma(W)$ : Dirac operator, 
$$
D=
\begin{pmatrix}
0 &  -\partial_{\theta}-\sqrt{-1}\partial_r+2\pi\sqrt{-1}\rho \\
\partial_{\theta}-\sqrt{-1}\partial_r-2\pi\sqrt{-1}\rho & 0
\end{pmatrix}
$$
\item Let $S^1$ acts on $M$ in the standard way, and   
we take a lift of 
the $S^1$-action on $W$ so that the action on the fiber direction is trivial. 
All the data are preserved by the $S^1$-action.  

\item $D_{S^1}:\Gamma(W) \to \Gamma(W)$ : Dirac operator along the $S^1$-orbits : 
$$
D_{S^1}=
\begin{pmatrix}
0 &  -\partial_{\theta}+2\pi\sqrt{-1}\rho \\
\partial_{\theta}-2\pi\sqrt{-1}\rho & 0
\end{pmatrix}
$$
\item $\mu:=-2\pi\rho:M\to \R$
\item $\mu^M=-2\pi\rho\partial_{\theta}\in \Gamma(TM)$ : induced vector field 
\item $f:M\to \R_+$ : smooth positive function on $M$ 
such that $f(r)=|r|$ for $|r-m|>1/2$ 
\end{enumerate}

\begin{remark}
The above data is a completion of the standard Hamiltonian $S^1$-action on the symplectic manifold $(m-1/4, m+1/4)\times S^1$ with pre-quantizing line bundle $(L,\nabla)=(\underline{\C}, d+\sqrt{-1}\mu d\theta)$, which has a cylindrical end 
and translational invariance on the end. 
The map $\mu$ gives the moment map of the $S^1$-action on this symplectic manifold, 
and $f^{\vep}$ is an admissible function for $(W,\mu,\nabla^W)$ for any $\vep>0$ 
in the sense of \cite{Braverman}. 
\end{remark}

\subsection{Perturbation of $D$}

For $s, t, \vep_1, \vep_2 \ge 0$ 
we consider the following perturbation of $D$ :
$$
D_{s, t,\vep_1, \vep_2}:=D+\sqrt{-1}sf^{\vep_1}c(\mu^M)+tf^{\vep_2}D_{S^1}=
\begin{pmatrix}
0 &  D_{s, t, \vep_1, \vep_2}^- \\ 
D_{s, t, \vep_1, \vep_2}^+ & 0  
\end{pmatrix}, 
$$where 
$$
D_{s, t, \vep_1, \vep_2}^+=
(1+tf^{\vep_2})(\partial_{\theta}-2\pi\sqrt{-1}\rho)-\sqrt{-1}\partial_r
-2\pi\sqrt{-1}sf^{\vep_1}\rho  
$$
and 
$$
D_{s, t, \vep_1, \vep_2}^-=
-(1+tf^{\vep_2})(\partial_{\theta}-2\pi\sqrt{-1}\rho)-\sqrt{-1}\partial_r
+2\pi\sqrt{-1}sf^{\vep_1}\rho.   
$$

Note that $D_{1,0,\vep_1,\vep_2}$ ($\vep_1>0$) is the perturbation 
considered in \cite{Braverman} and \cite{Ma-Zhang}, and 
$D_{0,t,\vep_1,0}$ is the one considered in 
\cite{Fujita-Furuta-Yoshida1}. 

\subsection{$\ker_{L^2}(D^+_{s, t, \vep_1, \vep_2})$}
For $\phi\in \Gamma(W^+)$ by taking the Fourier expansion 
we write 
$$
\phi(r,\theta)=\sum_{n\in \Z}a_n(r)e^{2\pi\sqrt{-1}n\theta}. 
$$Then we have 
$$
D^+_{s, t, \vep_1, \vep_2}\phi=
\sum_{n\in\Z}\sqrt{-1}\left(2\pi((1+tf^{\vep_2})(n-\rho)-sf^{\vep_1}\rho)a_n(r)
-a_n'(r)\right)e^{2\pi\sqrt{-1}n\theta}, 
$$and hence, 
\begin{eqnarray*}
D_{s, t, \vep_1, \vep_2}^+\phi=0 &\Longleftrightarrow& 
2\pi((1+tf^{\vep_2})(n-\rho)-sf^{\vep_1}\rho)a_n(r)
-a_n'(r)=0 \\ 
&\Longleftrightarrow& 
a_n(r)=\alpha_n\exp\left(2\pi\int^r((1+tf^{\vep_2})(n-\rho)-sf^{\vep_1}\rho)dr\right) \ (\alpha_n\in \C). 
\end{eqnarray*}
Now we determine the condition for $\phi\in\ker(D_{s, t, \vep_1, \vep_2})$ to be an $L^2$-section. 
Since $\rho=m\pm 1/2$ and $f=|r|$ for $\pm r$ large enough we have
\begin{eqnarray*}
a_n(r)&=&\alpha_n\exp\left(2\pi\int^r((1+ t|r|^{\vep_2})(n-m\mp 1/2)-
(m\pm1/2)s|r|^{\vep_1})dr\right)\\ 
&=&\alpha_n\exp\left(2\pi(n-m\mp 1/2)\left(r+\frac{tr|r|^{\vep_2}}{\vep_2+1}\right)
-2\pi(m\pm1/2)\frac{sr|r|^{\vep_1}}{\vep_1+1}\right). 
\end{eqnarray*}

Suppose that $\phi$ is an $L^2$-solution, i.e., $\int_{-\infty}^{\infty}|a_n(r)|^2dr<\infty$. 

\paragraph{(I) $\vep_1>\vep_2$. }
In this case when we take $r\gg 0$ we have 
$m+1/2>0$, and when we take $-r\gg 0$ we have 
$m-\frac{1}{2}<0$. 
So we have $m=0$, and hence, we have 
$\ker_{L^2}(D_{s, t, \vep_1, \vep_2})\neq 0$ if and only if $m=0$. 
If $m=0$, then $\ker_{L^2}(D_{s, t, \vep_1, \vep_2})$ is 
an infinite dimensional vector space generated by 
$\{a_n(r)e^{2\pi\sqrt{-1}n\theta} \ | \ n\in\Z\}$. 

\paragraph{(II) $\vep_1<\vep_2$. }
In this case as in the same way for (I) 
we have $m-1/2<n<m+1/2$, and hence,  
$\ker_{L^2}(D_{s, t, \vep_1, \vep_2})=\C\langle a_m(r)e^{2\pi\sqrt{-1}m\theta}\rangle$. 

\paragraph{(III) $\vep_1=\vep_2$. }
In this case when we take $r\gg 0$ and $-r\gg 0$ we have 
$$
\left(n-m-\frac{1}{2}\right)t-\left(m+\frac{1}{2}\right)s<0   \ {\rm and}
\left(n-m+\frac{1}{2}\right)t-\left(m-\frac{1}{2}\right)s>0. 
$$
So we have if $t=0$ and $s>0$, then $m=0$, and if $t>0$, then  
$$
\left(1+\frac{s}{t}\right)\left(m-\frac{1}{2}\right)<n< 
\left(1+\frac{s}{t}\right)\left(m+\frac{1}{2}\right). 
$$
In this case $\dim\ker_{L^2}(D_{s,t,\vep_1,\vep_1}^+)$ depends 
on $s/t$.

\subsection{$\ker_{L^2}(D^-_{s, t, \vep_1, \vep_2})$}
For $\phi\partial_{\theta}\in \Gamma(W^-)$ by taking the Fourier expansion 
we write 
$$
\phi(r,\theta)=\sum_{n\in \Z}a_n(r)e^{2\pi\sqrt{-1}n\theta}. 
$$Then we have 
$$
D^-_{s, t, \vep_1, \vep_2}\phi=-
\sum_{n\in\Z}\sqrt{-1}\left(2\pi((1+tf^{\vep_2})(n-\rho)-sf^{\vep_1}\rho)a_n(r)
+a_n'(r)\right)e^{2\pi\sqrt{-1}n\theta}, 
$$and hence, 
\begin{eqnarray*}
D_{s, t, \vep_1, \vep_2}^-\phi=0 &\Longleftrightarrow& 
2\pi((1+tf^{\vep_2})(n-\rho)-sf^{\vep_1}\rho)a_n(r)
+a_n'(r)=0 \\ 
&\Longleftrightarrow& 
a_n(r)=\alpha_n\exp\left(-2\pi\int^r((1+tf^{\vep_2})(n-\rho)-sf^{\vep_1}\rho)dr\right) \ (\alpha_n\in \C). 
\end{eqnarray*}
As in the same way for $\ker_{L^2}(D^+_{s, t, \vep_1, \vep_2})$ 
one can check that  
there are no $L^2$-solutions of $D_{s,t,\vep_1,\vep_2}^-\phi=0$. 
\subsection{Computations of indices}

We specialize the parameters and have computations of 
two indices, the transverse index  $\chi_{S^1}(M,\mu)$ in \cite{Braverman} 
and the equivariant local Riemann-Roch number $RR_{S^1, loc}(M,L)$. 

\subsubsection{$\chi_{S^1}(M,\mu)$}
When we take $s=1$, $t=0$ and $\vep_1>\vep_2$ 
we have the following. 
\begin{prop}
$$
\Ker_{L^2}(D^+_{1,0,\vep_1,\vep_2})=
\C\langle\{\delta_{m0}a_n(r)e^{2\pi\sqrt{-1}n\theta} \ | \ n\in\Z\}\rangle, 
\quad \Ker_{L^2}(D^-_{1,0,\vep_1,\vep_2})=0. 
$$
In particular we have 
$$
\chi_{S^1}(M,\mu)=
\C\langle\{\delta_{m0}a_n(r)e^{2\pi\sqrt{-1}n\theta} \ | \ n\in\Z\}\rangle
=\bigoplus_{n\in\Z}\delta_{m0}\C_{(n)}.  
$$
\end{prop}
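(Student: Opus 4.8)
The plan is to specialize the general computation of $\Ker_{L^2}(D^{\pm}_{s,t,\vep_1,\vep_2})$ carried out in the three preceding subsections to the parameters $s=1$, $t=0$, $\vep_1>\vep_2$, and then to read off the transverse index using Braverman's realization theorem.

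First I handle the $+$-part. Putting $t=0$ and $s=1$ in the explicit solution of $D^+_{s,t,\vep_1,\vep_2}\phi=0$ and using $\rho=m\pm 1/2$, $f=|r|$ for $\pm r\gg 0$, the Fourier coefficient becomes
\[
a_n(r)=\alpha_n\exp\Bigl(2\pi\bigl(n-m\mp 1/2\bigr)r-2\pi\bigl(m\pm 1/2\bigr)\frac{r|r|^{\vep_1}}{\vep_1+1}\Bigr)
\qquad(\pm r\gg 0).
\]
Since $\vep_1>\vep_2\ge 0$ forces $\vep_1>0$, the term in $|r|^{\vep_1+1}$ dominates the linear term as $|r|\to\infty$, so $a_n\in L^2(\R)$ forces that coefficient to have the right sign at each end, i.e. $m+1/2>0$ and $m-1/2<0$, hence $m=0$; conversely, for $m=0$ every $a_n$ decays like $\exp(-\pi|r|^{\vep_1+1}/(\vep_1+1))$ and is $L^2$. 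This is precisely case~(I) above with $t=0$, and it yields the stated description of $\Ker_{L^2}(D^+_{1,0,\vep_1,\vep_2})$. For the $-$-part the sign of the whole exponent is reversed, so the dominant term blows up at (at least) one end for every $n$; this is the content of the subsection on $\Ker_{L^2}(D^-_{s,t,\vep_1,\vep_2})$, hence $\Ker_{L^2}(D^-_{1,0,\vep_1,\vep_2})=0$.

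Next I identify the $S^1$-representation. The $S^1$-action rotates the $S^1$-factor of $M=\R\times S^1$ and, by construction, acts trivially on the fibres of $W$, so it acts on the Fourier mode $a_n(r)e^{2\pi\sqrt{-1}n\theta}$ with weight $n$; thus each such mode spans a copy of $\C_{(n)}$. Combining this with the previous paragraph, the weight-by-weight equivariant index $\Ker_{L^2}(D^+_{1,0,\vep_1,\vep_2})\ominus\Ker_{L^2}(D^-_{1,0,\vep_1,\vep_2})$ equals $\bigoplus_{n\in\Z}\delta_{m0}\C_{(n)}$.

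Finally, to obtain the equality with $\chi_{S^1}(M,\mu)$ I invoke Braverman's index theorem. By the Remark following the list of data, $f^{\vep_1}$ with $\vep_1>0$ is an admissible function for $(W,\mu,\nabla)$ in the sense of \cite{Braverman} (note the zero set of $\mu^M$ is compact, being empty when $m\neq 0$ and $\{0\}\times S^1$ when $m=0$), so $D_{1,0,\vep_1,\vep_2}=D+\sqrt{-1}f^{\vep_1}c(\mu^M)$ is transversally Fredholm and its equivariant index is the transverse index $\chi_{S^1}(M,\mu)$; since $\Ker_{L^2}(D^-_{1,0,\vep_1,\vep_2})=0$ this index is just $\Ker_{L^2}(D^+_{1,0,\vep_1,\vep_2})$, and the Proposition follows. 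The point that needs care --- rather than the elementary ODE estimates --- is that $\Ker_{L^2}(D^+)$ is genuinely infinite dimensional when $m=0$, so the ``index'' here must be understood as an element of the completed representation ring of $S^1$ (equivalently, a character or distribution on $S^1$); Braverman's theorem is stated in exactly this generality, and verifying that its hypotheses apply to our completed cylinder is the real content of the argument.
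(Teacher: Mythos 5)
Your proposal is correct and follows the same route as the paper: specialize the general $L^2$-kernel computation of the preceding subsections to $s=1$, $t=0$, $\vep_1>\vep_2$ (which is Case~(I)), observe that the growth of the $|r|^{\vep_1+1}$ term forces $m=0$ for $D^+$ and produces an incompatible pair of sign conditions for $D^-$, and then read off the weight decomposition. Your closing remark that the $m=0$ kernel is infinite-dimensional weight-by-weight finite and hence must be read in the completed representation ring (where Braverman's realization of the transverse index lives) is a correct and useful clarification that the paper leaves implicit.
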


\subsubsection{$RR_{S^1, {\loc}}(M,L)$}
When we take $s=\vep_2=0$ 
we have the following. 
\begin{prop}
$$
\Ker_{L^2}(D^+_{0,t,\vep_1,0})=
\C\langle a_m(r)e^{2\pi\sqrt{-1}m\theta}\rangle, 
\quad \Ker_{L^2}(D^-_{0,t,\vep_1,0})=0. 
$$
In particular we have 
$$
RR_{S^1, {\loc}}(M,L)=
\C\langle a_m(r)e^{2\pi\sqrt{-1}m\theta}\rangle
=\C_{(m)}.   
$$
\end{prop}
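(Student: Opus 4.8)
The proof is the specialization of the two preceding subsections to the parameter values $s=\vep_2=0$; once $s=0$ the value of $\vep_1$ is irrelevant, and the situation is not covered by the cases (I)--(III) above (all of which assume $s>0$), so it is handled by direct substitution. First I would put $s=\vep_2=0$ in the solution formula for $D^+_{s,t,\vep_1,\vep_2}\phi=0$: since $f^{0}\equiv 1$ the coefficient $1+tf^{\vep_2}$ collapses to the constant $1+t$ and the Braverman term $sf^{\vep_1}\rho$ disappears, so a solution has Fourier coefficients
$$
a_n(r)=\alpha_n\exp\left(2\pi(1+t)\int^r(n-\rho)\,dr\right),\qquad\alpha_n\in\C .
$$
Reading off the $L^2$-condition from the explicit $\rho$: for $\pm r$ large one has $\rho=m\pm 1/2$, so the exponent is asymptotically linear in $r$ with slope $2\pi(1+t)(n-m\mp 1/2)$, and $\int_{-\infty}^{\infty}|a_n|^2\,dr<\infty$ forces $n-m-1/2<0$ at $+\infty$ and $n-m+1/2>0$ at $-\infty$, i.e. $n=m$; conversely $n=m$ yields a solution that decays exponentially at both ends, hence lies in $L^2$. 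This gives $\Ker_{L^2}(D^+_{0,t,\vep_1,0})=\C\langle a_m(r)e^{2\pi\sqrt{-1}m\theta}\rangle$. Substituting the same values into the subsection on $D^-_{s,t,\vep_1,\vep_2}$ flips the two inequalities, so $L^2$-integrability would require $n\ge m+1$ at $+\infty$ and $n\le m-1$ at $-\infty$ simultaneously, which is impossible; hence $\Ker_{L^2}(D^-_{0,t,\vep_1,0})=0$, in agreement with the general assertion there.

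It remains to pass from these kernels to $RR_{S^1,loc}(M,L)\in R(S^1)$. With $\vep_2=0$ the perturbed operator is $D_{0,t,\vep_1,0}=D+tD_{S^1}$, which for $t\gg 0$ is exactly the perturbation $D+t\rho_V\hat D_{S^1}$ of Section~\ref{secboundary} with the cut-off $\rho_V$ replaced by the constant $1$; by the deformation invariance of the Fredholm index this does not change the index, which therefore equals the $\Z/2$-graded $S^1$-module $\Ker_{L^2}(D^+_{0,t,\vep_1,0})\ominus\Ker_{L^2}(D^-_{0,t,\vep_1,0})$. Since the chosen lift of the $S^1$-action is trivial on the fibre of $L$ and the base action rotates $\theta$, the generator $a_m(r)e^{2\pi\sqrt{-1}m\theta}$ transforms with weight $m$, i.e. spans $\C_{(m)}$; hence $RR_{S^1,loc}(M,L)=\C_{(m)}$, equivalently $RR^{(n)}_{S^1,loc}(M,L)=\delta_{mn}$. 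Comparing with the value of $\chi_{S^1}(M,\mu)$ computed in the previous proposition then exhibits the asserted discrepancy (for $m\neq 0$).

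The only point that is not a routine ODE computation is this last passage: one must know that $D_{0,t,\vep_1,0}$ is Fredholm for $t\gg 0$, that its index is independent of $t\gg 0$ and of the chosen completion, and that replacing the constant coefficient by the cut-off $\rho_V$ of Section~\ref{secboundary} leaves the index unchanged. All three are instances of the deformation and excision invariance established in \cite{Fujita-Furuta-Yoshida1}, and here they are moreover corroborated by the explicit finite-dimensionality of both $L^2$-kernels found above.
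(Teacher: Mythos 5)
Your argument is correct and is exactly the calculation the paper intends: the proposition is the specialization of the preceding $L^{2}$-kernel analysis to $s=\vep_{2}=0$, where the $\vep_{1}$-dependence drops out, the exponent becomes asymptotically linear with slope $2\pi(1+t)(n-m\mp 1/2)$, and the $L^{2}$ condition forces $n=m$ for $D^{+}$ and is unsatisfiable for $D^{-}$. Your observation that the paper's case split (I)--(III) presupposes $s>0$, so that the $s=0$ case needs the direct substitution you carry out (or, equivalently, the case-(III) inequalities with $s/t=0$), is a sensible clarification, and the final paragraph correctly invokes the deformation/excision invariance of the framework to identify the resulting $\Z/2$-graded $L^{2}$-kernel with $RR_{S^{1},loc}(M,L)=\C_{(m)}$.
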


\section*{Acknowledgements}
The author would like to thank 
Mikio Furuta and Takahiko Yoshida 
for stimulating discussions. 
He is grateful to Michele Vergne and 
Xiaonan Ma for discussions on the relation between our index and 
the transverse index. 

\bibliographystyle{mrl}
\bibliography{reference}

\end{document}